\newcommand{\mute}[2] {}
\newcommand     {\printname}[1] {}
\newcommand{\labell}[1] {\label{#1}}
\numberwithin{equation}{section}
\newtheorem {Theorem}[equation]                   {Theorem}
\newtheorem*{Theorem*}                   {Theorem}
\newtheorem {Lemma}[equation]           {Lemma}
\newtheorem {Corollary} [equation]      {Corollary}
\newtheorem* {Corollary*}                {Corollary}
\newtheorem {Proposition} [equation]    {Proposition}
\newtheorem {Question} [equation]    {Question}
\newtheorem* {Lemma*}                    {Lemma}
\theoremstyle{definition}
\newtheorem{Definition}[equation]{Definition}
\newtheorem*{Definition*}{Definition}
\theoremstyle{remark}
\newtheorem{Remark}[equation]{Remark}
\newtheorem*{Remark*}{Remark}
\long\def\symbolfootnote[#1]#2{\begingroup%
\def\thefootnote{\fnsymbol{footnote}}\footnote[#1]{#2}\endgroup}
\def    \CP	{{\mathbb C}{\mathbb P}}
\def    \R	{{\mathbb R}}
\def    \N	{{\mathbb N}}
\def    \Z       {{\mathbb Z}}
\begin{document}

\title[Eight dimensional Hamiltonian $S^1$-spaces with 5 fixed points]
{Hamiltonian circle actions on eight dimensional manifolds with minimal fixed sets}

\author[Donghoon Jang]{Donghoon Jang}
\address{Department of Mathematics, University of Illinois at Urbana-Champaign,
 Urbana, IL 61801}
\address{School of Mathematics, Korea Institute for Advanced Study, 85 Heogiro, Dongdaemun-gu, Seoul, Korea}
 \email{groupaction@kias.re.kr, jang12@illinois.edu}

\author[Susan Tolman]{Susan Tolman}
\address{Department of Mathematics, University of Illinois at Urbana-Champaign,
 Urbana, IL 61801}
\email{stolman@math.uiuc.edu}


\thanks{Susan Tolman is partially supported by NSF Grant DMS \#12-06365.}
\thanks{Donghoon Jang is partially supported by Campus Research Board Awards - University of Illinois at Urbana-Champaign.}

\begin{abstract}
Let the  circle act  in a Hamiltonian fashion 
on a closed  $8$-dimensional  symplectic manifold $M$ with exactly five fixed points,
which is the smallest possible fixed set.
In  \cite{GS}, L. Godinho and S. Sabatini show that if $M$ satisfies an
extra ``positivity condition'' then the isotropy weights at the fixed points of $M$
agree with those of some linear action on $\CP^4$.  
As a consequence,
$H^*(M;\Z) = \Z[y]/y^5$ and $c(TM) = (1+y)^5$.
In this paper, we prove that their positivity condition  holds for 
$M$.
This completes the proof of the ``symplectic Petrie conjecture" for
Hamiltonian circle actions on 8-dimensional closed symplectic manifolds with minimal fixed sets.
\end{abstract}

\maketitle
\section{Introduction}

The classification of Hamiltonian $S^1$-actions is an important subject in symplectic 
geometry. Although in general the problem is probably too hard to be tractable,
progress has been made in special cases.
In particular, 
let $(M,\omega)$ be a closed $2n$-dimensional symplectic manifold with a Hamiltonian 
circle action
such that $H^{2i}(M;\R) \cong H^{2i}(\CP^n;\R)$ for all $i$.
Since $[\omega]^i \neq 0$ for $0 \leq i \leq n$, this means that $M$ has the smallest possible even Betti numbers.
In analogy with the Petrie conjecture \cite{P}, one can reasonably hope to classify all 
the
possible (equivariant) cohomology rings and (equivariant) Chern classes of the manifold $M$.

This project, which was first proposed by the second author in \cite{T},
 has been successfully carried out in a number of special cases.
In that paper, the author shows that if $\dim M = 6$  only four cohomology rings can arise, 
and the Chern classes are uniquely determined by the ring in each case. 
(An earlier paper also solved  the problem when  $\dim M = 6$  and the fixed points are isolated \cite{A}; see also \cite{H2}.)
Additionally,  Hattori's results in \cite{H1} imply that if $\dim M = 2n$ the first Chern class
of $M$ must be at most $n+1$ times a primitive generator of $H^2(M;\Z)$.  
Moreover, if the multiple is $n+1$, the cohomology ring and Chern classes of $M$ agree with those of $\CP^n$;
if the multiple is $n$ then $n > 1$ is odd and they agree with those of
${G}^+_{2}(\R^{n+2})$, the Grassmannian of oriented two planes
in $\R^{n+2}$.  
Alternatively, if $M$ has only two fixed components then again the cohomology ring and Chern classes of $M$ agree
with those of either $\CP^n$ or  ${G}^+_{2}(\R^{n+2})$ \cite{LT}.
If $M$ is K\"ahler, the same claim holds in some cases with three fixed components \cite{Lh1}.
Finally, several papers consider the same situation but work with finer geometric or topological invariants,
classifying such spaces up to (equivariant) diffeomorphism or symplectomorphism \cite{M, LOS, Lh1}.

In this paper, we consider the case that all the fixed points are isolated.
Because the family of almost complex structures $J$ on $M$ compatible
with $\omega$ is contractible, this implies that there
is a multiset of $n$ non-zero integers $w_{p1},\dots,w_{pn}$, called {\bf isotropy weights},
associated to each fixed point $p \in M^{S^1}$.
Moreover, the moment map $\phi \colon M \to \R$ is a perfect Morse function, 
the critical points are the fixed points, and the index of each critical point is twice the 
number of negative weights (counted with multiplicity). 
Therefore, there are at least $n+1$ fixed points, and 
$H^{2i}(M;\R) \cong H^{2i}(\CP^n;\R)$ for all $i$
if and only if there are exactly $n+1$ fixed points --
one fixed point of index $2i$ for each $i \in \{0,\dots,n\}$.
In this case, the following holds \cite{T}: {\it The isotropy weights at the fixed points completely determine the ordinary cohomology ring $H^*(M;\Z)$,  
the equivariant cohomology ring $H^*_{S^1}(M;\Z)$, 
its restriction to the equivariant cohomology of the fixed set $H^*_{S^1}(M^{S^1};\Z)$,
and the (equivariant) Chern classes}\footnote{
The {\bf equivariant cohomology} of $M$ is $H_{S^1}^*(M) := H^*(M \times_{S^1} E S^1)$, where $ES^1$ is a contractible space on which $S^1$ acts freely.
Similarly,  the {equivariant Chern classes} of $M$ are the Chern classes
of $T(M) \times_{S^1} E S^1 \rightarrow M \times_{S^1} E S^1$.}.
Hence, to classify the possible cohomology rings and Chern classes,
it  is enough to classify the possible isotropy weights.

In their paper \cite{GS}, L. Godinho and S. Sabatini show that if $M$ is $8$-dimensional, has exactly fixed fixed points,  and
satisfies an 
extra ``positivity condition'' $(\mathcal P^+_0)$, then
the isotropy weights at the fixed points of $M$ agree with those of some linear action on $\CP^4$;
see Definitions~\ref{multigraph}, \ref{describes}, and  \ref{pos},  and
Theorem~\ref{GStheorem}.
In Theorem~\ref{positive}, we show that the positivity condition $(\mathcal P^+_0)$ is always satisfied for these manifolds. 
Thus, we prove the following theorem:

\begin{Theorem}\labell{main}
Consider a Hamiltonian circle action on a
$8$-dimensional closed symplectic  manifold $(M,\omega)$ with exactly  $5$ fixed points.
Then the isotropy weights at the fixed points of $M$ agree with those of some linear action on 
$\CP^4$.
As a consequence,  $H^* (M;\mathbb{Z})=\mathbb{Z}[y]/y^5$ and $c(TM)=(1+y)^5$;
moreover,  the equivariant cohomology and equivariant Chern classes of
$M$ and $\CP^4$ agree.
\end{Theorem}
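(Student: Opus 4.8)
The plan is to reduce the whole theorem to the single assertion that $(\mathcal{P}^+_0)$ always holds for such $M$, namely Theorem~\ref{positive}, and then to prove that assertion by a combinatorial analysis of the isotropy weights. Granting Theorem~\ref{positive}, the hypothesis of the Godinho--Sabatini result Theorem~\ref{GStheorem} is met for $M$, so the isotropy weights at the fixed points of $M$ agree with those of some linear $S^1$-action on $\CP^4$. By the second author's earlier work \cite{T} quoted above, the isotropy weights of a Hamiltonian $S^1$-space with isolated minimal fixed set determine $H^*(M;\Z)$, the equivariant cohomology ring $H^*_{S^1}(M;\Z)$, its restriction $H^*_{S^1}(M;\Z)\to H^*_{S^1}(M^{S^1};\Z)$, and the (equivariant) Chern classes. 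Since $H^*(\CP^4;\Z)=\Z[y]/y^5$ and $c(T\CP^4)=(1+y)^5$ regardless of the action, and since for the particular linear $\CP^4$-action whose weights match those of $M$ these equivariant invariants are in turn pinned down by \cite{T}, every conclusion of Theorem~\ref{main} follows. It therefore remains to prove Theorem~\ref{positive}.

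To set up: since $\phi$ is a perfect Morse function there is exactly one fixed point $p_i$ of index $2i$ for each $i\in\{0,\dots,4\}$, so $p_i$ carries $i$ negative and $4-i$ positive weights; in particular all four weights at $p_0$ are positive, all four at $p_4$ are negative, and $\phi(p_0)=\min\phi$, $\phi(p_4)=\max\phi$. We are also free to replace the action by its inverse. One then encodes the weight data in the multigraph $\Gamma$ of Definition~\ref{multigraph}, recalls from Definition~\ref{describes} what it means for $\Gamma$ to describe the action, and recalls the statement of $(\mathcal{P}^+_0)$ from Definition~\ref{pos}.

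The core is to determine all multigraphs that can describe such an $M$ and to check $(\mathcal{P}^+_0)$ for each. I would exploit three families of constraints simultaneously. First, Atiyah--Bott--Berline--Vergne localization: applied to $1,\phi,\phi^2,\phi^3$ it yields $\sum_{i=0}^{4}\frac{\phi(p_i)^k}{\prod_j w_{p_i,j}}=0$ for $k\le 3$, and applied to equivariant Chern classes, together with Hirzebruch--Riemann--Roch, it yields integrality and sign constraints (Todd genus $1$, and Hattori-type nonnegativity \cite{H1} of the coefficients of the $\chi_y$-genus). Second, the isotropy structure: for every integer $m\ge 2$ the fixed set $M^{\Z_m}$ is a disjoint union of closed $S^1$-invariant symplectic submanifolds, each itself a Hamiltonian $S^1$-space with isolated fixed set whose weights at $p_i$ are exactly the weights of $p_i$ divisible by $m$; in particular two fixed points joined by an $S^1$-invariant $2$-sphere have opposite weights along it and remaining weights differing by multiples of it --- this is precisely the $\Z_m$-isotropy data that Definition~\ref{describes} requires $\Gamma$ to encode --- and feeding in the known classifications in dimensions $2$, $4$ and $6$ (\cite{T,A}) for these submanifolds is crucial. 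Third, Morse theory: $\Gamma$ is connected because the level sets of $\phi$ are connected, and the index pattern $0,2,4,6,8$ further restricts its shape. Combining these leaves finitely many candidate configurations, which I would organize by the smallest weight at $p_0$, by the coincidence pattern among the weights at $p_0$, and by the combinatorial type of $\Gamma$, discarding at each stage any candidate violating a localization identity, an index constraint, or the requirement that its $\Z_m$-fixed submanifolds be legitimate lower-dimensional Hamiltonian $S^1$-spaces; for each survivor one verifies $(\mathcal{P}^+_0)$ directly.

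The main obstacle is precisely this case analysis. The ``cheap'' constraints (the vanishing localization sums and the index inequalities) have many solutions, and some of these violate $(\mathcal{P}^+_0)$ --- otherwise Theorem~\ref{GStheorem} would already be unconditional --- so for each bad configuration one must prove non-realizability, and the only leverage comes from the genuinely geometric input: the $\Z_m$-isotropy submanifolds must be actual Hamiltonian $S^1$-manifolds (hence their weights must occur in the lower-dimensional classifications), the invariant spheres they force must be compatible with the index data, and the finer localization identities must hold. The delicate point is to carry this out without accidentally excluding a realizable configuration --- i.e.\ to separate ``the constraints are contradictory'' from ``the constraints merely imply $(\mathcal{P}^+_0)$'' --- and I expect the bulk of the paper to be exactly this bookkeeping. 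A possible cleaner variant is to reinterpret the quantities in $(\mathcal{P}^+_0)$ as alternating sums of Betti numbers of the reduced spaces $\phi^{-1}(c)/S^1$ --- symplectic orbifolds that are weighted projective $\CP^3$'s just above $\min\phi$ and just below $\max\phi$ and that evolve by orbifold wall-crossings at the intermediate critical values --- and to force the inequality by tracking these spaces across the walls.
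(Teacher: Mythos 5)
Your reduction of Theorem~\ref{main} to Theorem~\ref{positive} via Theorem~\ref{GStheorem} and the rigidity results of \cite{T} is exactly right and matches the paper. The problem is that your proof of Theorem~\ref{positive} is not a proof: it is a plan for an exhaustive case analysis (organized by localization identities, isotropy submanifolds, and lower-dimensional classifications) that you do not carry out, and you yourself concede that the ``cheap'' constraints admit many solutions violating $(\mathcal P^+_0)$ which would each require a separate non-realizability argument. That unexecuted bookkeeping is precisely the content of the theorem, so as written there is a genuine gap. Moreover, the case analysis you propose is essentially the route of \cite{GS} itself, which is why their result is stated conditionally on $(\mathcal P^+_0)$; without a new structural input you have no mechanism to close the loop.

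The missing idea is a Kosniowski-type counting identity. Using P.~Li's formula $\sum_{p}\sigma_k(t^{w_{p1}},\dots,t^{w_{pn}})/\prod_j(1-t^{w_{pj}})=(-1)^kN^k$ and extracting the coefficient of $t^a$, where $a$ is the smallest positive weight, one shows that the number of occurrences of $-a$ at fixed points of index $2k+2$ equals the number of occurrences of $+a$ at fixed points of index $2k$. Applied to each component of each isotropy submanifold $M^{\Z/(l)}$ (where $l$ is automatically the smallest weight), this produces a describing multigraph in which every edge $e$ raises the index \emph{inside} $M^{\Z/(w(e))}$ by exactly $2$; in particular there are no self-loops, the unique edge into $p_1$ must start at the minimum of its isotropy component, hence at $p_0$, and dually the unique edge out of $p_3$ must end at $p_4$. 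A count of the remaining weights then forces every other edge to go from a vertex of index $\le 4$ to one of index $\ge 4$, hence (no self-loops, one vertex of index $4$) to strictly increase the index, and the monotonicity $\Gamma_{p_0}>\Gamma_{p_1}>\cdots>\Gamma_{p_4}$ from \cite{T} --- an ingredient your sketch also omits --- converts this into $\Gamma_{i(e)}\ge\Gamma_{t(e)}$ for every edge. This short argument replaces your entire case analysis; without it, or something equally sharp, your outline does not separate the realizable configurations from the unrealizable ones.
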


\begin{Remark}
If $M$ is a homotopy $\mathbb{CP}^4$, then by \cite{J} the (equivariant) cohomology and (equivariant) Pontryagin classes of $M$ and $\CP^4$ agree.
\end{Remark}

To prove Theorem~\ref{positive}, we  develop several tools  
to analyze the labelled multigraphs that describe circle actions
on almost complex manifolds with isolated fixed points\footnote{
See Definitions~\ref{multigraph} and \ref{describes}.};
see Lemma~\ref{decreases} and Corollary~\ref{cor:decreases}.
We believe that these will be useful more generally in classifying such actions.
For example, these claims could have reduced the number of cases that Godinho and Sabatini needed to consider in order to prove Theorem~\ref{GStheorem}; cf. \cite{GS}. 

Unfortunately, the proof of Theorem~\ref{positive} fails in higher dimensions;
we do not know if all Hamiltonian circle actions with minimal fixed sets satisfy the positivity condition $(\mathcal P_0^+)$.
However, condition 
$(\mathcal P^+_0)$ 
is  not satisfied for general Hamiltonian actions; see  \cite[Remark 5.7]{GS}.
In contrast, we do not know any counter-examples to the following general question; an affirmative answer to it would imply that $(\mathcal P^+_0)$ holds
whenever the fixed set is minimal:

\begin{Question}
Let the circle act on a closed symplectic manifold $(M,\omega)$ with isolated fixed points and with moment map $\phi \colon M \to R$. 
Does there exist a labelled multigraph describing
$M$ so that 
$\phi(i(e)) < \phi(t(e))$
for every edge $e$?
\end{Question}

Finally, we would like to thank the  anonymous referees for helping us to improve our exposition.

\section{The main technical tool}

In this section, we prove our main technical tool, which gives  restrictions on the smallest positive weight for actions
on almost complex manifolds (and therefore symplectic manifolds).

Let the circle act on a closed $2n$-dimensional almost complex manifold with isolated fixed points\footnote{
We assume the action preserves the almost complex structure.}.
As before, there is a multiset $w_{p1},\dots,w_{pn}$ of {\bf isotropy weights} at each fixed point $p$.
In analogy with the Hamiltonian case, define the {\bf index} of a fixed point $p$ to be $2 \lambda_p$,
where  $\lambda_p$ is the number of negative isotropy weights at $p$.
Let $N^k$ be the number of fixed points of index $2k$, and let $\sigma_k$ denote the elementary symmetric polynomial of degree $k$ in $n$ variables.
In \cite{Lp}, P. Li proves the following:

\begin{Theorem}\label{Li} 
Let the circle act  on a closed $2n$-dimensional almost complex  manifold $(M,J)$ with isolated fixed points. 
Then for all $0 \leq k \leq n$,
$$
\sum_{p \in M^{S^1}} \frac{\sigma_k (t^{w_{p1}}, \cdots, t^{w_{pn}})}{\prod_{j=1}^n (1-t^{w_{pj}})}=(-1)^k N^k.$$
\end{Theorem}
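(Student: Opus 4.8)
The plan is to apply the equivariant localization theorem (Atiyah--Bott--Berline--Vergne) to a well-chosen equivariant cohomology class, then extract the stated identity by a careful analysis of both sides as rational functions of $t$. Concretely, for each $0 \le k \le n$, consider the $S^1$-equivariant Chern class $c_k^{S^1}(TM) \in H^{2k}_{S^1}(M;\Q)$ (using the given invariant almost complex structure). Localization expresses $\int_M^{S^1} c_k^{S^1}(TM)$ as a sum over the fixed points, where the contribution of an isolated fixed point $p$ is $\sigma_k(w_{p1}u, \dots, w_{pn}u)$ divided by the equivariant Euler class $\prod_{j=1}^n (w_{pj} u)$ of the normal bundle, with $u$ the generator of $H^*_{S^1}(\mathrm{pt})$. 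This gives a rational identity in $u$ whose left side is a polynomial of degree $k-n \le 0$; in fact for $k < n$ the integral vanishes for degree reasons and for $k = n$ it equals $\int_M c_n(TM) = \chi(M)$, which by the isolated-fixed-point hypothesis equals $N^0 + N^1 + \cdots + N^n$.

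The bridge to the exponential form in the statement is a standard substitution trick: replace $u$ by a formal variable and exponentiate. More precisely, I would instead localize the equivariant class $\mathrm{Td}$-type or directly work with the holomorphic Lefschetz-style sum $\sum_p \frac{\sigma_k(t^{w_{p1}}, \dots, t^{w_{pn}})}{\prod_j (1 - t^{w_{pj}})}$ by recognizing it as $\int_M^{S^1}$ of an equivariant class for the $S^1$-action, evaluated via the fixed-point formula for the circle acting with isolated fixed points — i.e., the Atiyah--Bott fixed point theorem applied not to a Dolbeault complex but formally to the generating-function class $\sum_k \sigma_k \, x^k$. The key computation is that near a fixed point $p$ of index $2\lambda_p$, rewriting $\frac{1}{1 - t^{w}}$ for a negative weight $w = -|w|$ as $\frac{-t^{|w|}}{1 - t^{|w|}}$ produces a sign $(-1)^{\lambda_p}$ together with a function that is holomorphic and nonzero at $t = 0$; summing the local contributions and setting $t \to 0$ kills every term except those where the $\sigma_k$ picks out exactly the negative-weight directions, leaving $(-1)^k N^k$.

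The main steps in order: (1) set up $H^*_{S^1}(M;\Q)$ and the equivariant Chern classes, and state ABBV localization for isolated fixed points; (2) identify the left-hand sum as the localized expression for a single global equivariant class, so that in particular the sum is independent of $t$ and equals a fixed rational number; (3) evaluate that rational number by taking a limit — the cleanest choice is $t \to 0$ (equivalently examining the constant term after the sign-normalization above), which isolates the fixed points whose contribution survives; (4) do the combinatorial bookkeeping showing the surviving contributions total $(-1)^k N^k$, using that a fixed point of index $2\lambda_p$ contributes a term of $\sigma_k$ coming from choosing $k$ of its weights, with the relevant choice (all negative ones, when $k = \lambda_p$, and with compensating vanishing otherwise) producing the claimed count.

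The hard part will be Step (3)--(4): justifying that the per-fixed-point rational functions, which individually have poles at roots of unity, sum to a function with no poles (hence constant), and then correctly matching the limiting contributions to $N^k$ with the right signs. One must be careful that the individual summands are genuinely rational in $t$ and that the cancellation of poles is exactly what localization guarantees; the sign analysis, sorting out how $(-1)^{\lambda_p}$ interacts with the binomial expansion of $\sigma_k$ and which monomials vanish at $t=0$, is where the bulk of the care is needed. Everything else — the localization setup and the degree count — is routine.
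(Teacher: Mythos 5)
First, a remark on what you are being compared against: the paper offers no proof of this statement at all --- it is quoted from P.~Li \cite{Lp} (``The rigidity of Dolbeault-type operators and symplectic circle actions''), so the only question is whether your argument stands on its own. Your overall strategy --- realize the fixed-point sum as the equivariant index of a Dolbeault-type operator twisted by $\Lambda^k T^{1,0}M$ and evaluate it by a limit in $t$ --- is in fact the strategy of Li's proof and of Kosniowski's antecedent \cite{K}. But your opening detour through ABBV localization of $c_k^{S^1}(TM)$ is a dead end, as you half-acknowledge: there is no ``substitution/exponentiation'' bridge from the rational expression in $u$ to the character-valued sum in $t$. The correct global object is the $S^1$-equivariant index of the Spin$^c$ Dirac operator of $(M,J)$ twisted by $\Lambda^k(T^{1,0}M)$, to which the Atiyah--Segal--Singer fixed point formula applies on a merely almost complex manifold (there is no Dolbeault complex to speak of here), and you never quite name it.

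The genuine gap is in your step (2): being ``the localized expression for a single global equivariant class'' does \emph{not} make the sum independent of $t$. What the index theorem gives you is that the sum of the local contributions is the character of a virtual $S^1$-representation, i.e.\ a finite Laurent polynomial in $t$ (so with no poles at roots of unity), but a priori a nonconstant one --- twist by a nontrivial equivariant line bundle and you get a genuinely nonconstant character by the very same localization. The constancy is exactly the \emph{rigidity} in the title of Li's paper, and it requires a two-sided limit argument: after rewriting $1/(1-t^{w}) = -t^{|w|}/(1-t^{|w|})$ at each negative weight, one checks that every local contribution has a finite limit both as $t\to 0$ \emph{and} as $t\to\infty$, and a finite Laurent polynomial with finite limits at $0$ and $\infty$ must be constant. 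You only examine $t\to 0$, which by itself shows merely that the Laurent polynomial has no negative powers and that its value at $0$ is $(-1)^kN^k$; without the $t\to\infty$ limit (which yields $(-1)^kN^{n-k}$ and, as a byproduct, Poincar\'e duality $N^k=N^{n-k}$) you cannot conclude that the polynomial equals that constant. Your step (4) --- the bookkeeping showing that at a fixed point $p$ the only monomial of $\sigma_k$ surviving at $t=0$ is the one selecting all $\lambda_p$ negative weights, contributing $(-1)^{\lambda_p}$ precisely when $k=\lambda_p$ --- is correct as far as it goes.
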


Kosniowski proved a claim analogous to Proposition~\ref{match} below
for holomorphic vector fields on complex manifolds with simple isolated zeroes \cite{K}.
Closely following his idea, we use Theorem~\ref{Li} to prove the following:

\begin{Proposition}\labell{match} Let the circle act  on a closed  $2n$-dimensional almost complex manifold $(M,J)$ with 
isolated fixed points.  
Let $a$ be the smallest positive isotropy weight that occurs at any fixed point.
Given any $k \in \{0,1,\dots,n-1\}$,
the number of times the isotropy weight $-a$ occurs at fixed points of index $2k +2 $ is equal to
the number of times the isotropy weight $+a$ occurs at fixed points of index $2k$.
\end{Proposition}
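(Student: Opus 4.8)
The plan is to extract the coefficient identity from Theorem~\ref{Li} by analyzing the Laurent expansion of each term in a suitable variable. Writing $a$ for the smallest positive weight occurring anywhere, the idea (following Kosniowski) is to substitute $t = e^{-s}$ or more directly to expand each summand in Theorem~\ref{Li} as a Laurent series in $(1-t)$ near $t = 1$, or alternatively to track the behavior as $t \to \infty$ or $t \to 0$ along rays; what actually matters is isolating the contribution of the weight $\pm a$. Concretely, I would take $k$ as given, consider the degree-$k$ identity $\sum_p \frac{\sigma_k(t^{w_{p1}},\dots,t^{w_{pn}})}{\prod_j (1-t^{w_{pj}})} = (-1)^k N^k$, and study the Laurent expansion of each term $\frac{\sigma_k(t^{w_{p1}},\dots,t^{w_{pn}})}{\prod_j (1-t^{w_{pj}})}$ in the variable $t$ around $t=0$ (equivalently $t=\infty$ by the symmetry $w \mapsto -w$, $k \mapsto n-k$). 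Since the right side is a constant, all non-constant terms in any consistent expansion must cancel.

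The key computation is to determine, for each fixed point $p$, the order of vanishing (or pole) of its summand and the leading coefficient. Order the weights at $p$ so that $\lambda_p$ of them are negative. Each factor $\frac{1}{1-t^{w}}$ expands as $\sum_{m \geq 0} t^{wm}$ if $w>0$ and as $-t^{-w}\sum_{m\geq 0} t^{-wm}$ if $w<0$; thus $\frac{1}{\prod_j(1-t^{w_{pj}})}$ has leading term $(-1)^{\lambda_p} t^{-\sum_{w_{pj}<0} w_{pj}}$ times a power series in $t$ with constant term $1$. Multiplying by $\sigma_k$ of the $t^{w_{pj}}$ and collecting, the whole summand for $p$ is $(-1)^{\lambda_p}$ times $t^{c_p}$ times a power series, where $c_p$ is the sum of the $k$ smallest exponents among $\{w_{p1},\dots,w_{pn}\} \cup$ (the shift from the denominator)—more carefully, I should expand everything as a genuine Laurent series in $t$ and read off the coefficient of the specific monomial whose exponent is forced by the weight $a$. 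The cleanest route: expand in $t$ near $0$, note that the minimal-degree monomial across the sum involving $t^{a}$ in the "numerator choice" versus $t^{-a}$ contributions must match up; comparing the coefficient of the appropriate power of $t$ on both sides (the right side contributes nothing since it is constant) yields exactly the asserted equality between the multiplicity of $+a$ at index-$2k$ points and the multiplicity of $-a$ at index-$(2k+2)$ points.

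More precisely, here is the mechanism I expect to drive the proof: among all fixed points, write each summand as a Laurent series in $t$; by minimality of $a$, the lowest-order "interesting" terms are governed by which weights equal $\pm a$ and $\pm 1$-type contributions. At a fixed point $q$ of index $2k$ where $+a$ occurs as a weight, choosing that weight in $\sigma_k$ (together with appropriate choices making up the rest) contributes a monomial $(-1)^k t^{\text{(something)} + a}$; at a fixed point $p$ of index $2k+2$ where $-a$ occurs, the extra negative weight shifts the denominator exponent and, after expanding, contributes $(-1)^{k+1} t^{\text{(same something)}}$ possibly multiplied by $t^{-a}\cdot t^{a}$ from an interplay—so the two families of contributions land on the same monomial with opposite signs, and since no other fixed points contribute to that monomial (again by minimality of $a$) and the right-hand side is constant, the multiplicities must be equal. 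I would make this rigorous by a careful bookkeeping of exponents: substitute and extract the coefficient of $t^{N}$ for a cleverly chosen large or small $N$ depending on $a$ and $k$, reducing to a finite identity.

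The main obstacle I anticipate is the bookkeeping of which monomial to extract and showing that only the two relevant families of fixed points contribute to it. Because weights other than $\pm a$ can be large positive integers, their series $\sum_m t^{wm}$ are sparse, but one must still rule out accidental contributions to the targeted coefficient; the minimality of $a$ (no positive weight smaller than $a$, hence no negative weight larger than $-a$) is exactly what prevents cross-contamination, and the argument must invoke it cleanly. A secondary subtlety is handling the case $k=0$ (pure denominator, no $\sigma_k$ choices) and making sure the index shift $2k \to 2k+2$ is correctly matched with a negative-weight shift in the denominator expansion. Once the exponent arithmetic is pinned down, the rest is a routine comparison of Laurent coefficients using Theorem~\ref{Li}.
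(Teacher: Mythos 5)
Your overall strategy --- normalize each summand of Theorem~\ref{Li} by writing $\frac{1}{1-t^{w}}=\frac{-t^{-w}}{1-t^{-w}}$ for $w<0$, expand as a power series near $t=0$, and read off the coefficient of the power of $t$ singled out by the minimal weight $a$ --- is exactly the paper's. But the mechanism you propose for closing the argument is wrong, and the missing piece is a genuine idea, not bookkeeping. You assert that the targeted coefficient receives contributions only from the two families in the statement (index-$2k$ points carrying $+a$ and index-$(2k+2)$ points carrying $-a$), which then cancel pairwise. In fact, after normalization the summand at a point $p$ with $\lambda_p=k$ is $1+\bigl(\nu_p(a)+\nu_p(-a)\bigr)t^a+O(t^{a+1})$ --- the $t^a$ term coming from the \emph{denominator} $\prod_j(1-t^{|w_{pj}|})$, which you never account for --- while the summand at a point with $\lambda_p=k-1$ (resp.\ $\lambda_p=k+1$) is $-\nu_p(a)t^a+O(t^{a+1})$ (resp.\ $-\nu_p(-a)t^a+O(t^{a+1})$), and all other points contribute $O(t^{2a})$. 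Hence comparing coefficients of $t^a$ yields only the recursion
\[
\sum_{\lambda_p=k}\bigl(\nu_p(a)+\nu_p(-a)\bigr)=\sum_{\lambda_p=k-1}\nu_p(a)+\sum_{\lambda_p=k+1}\nu_p(-a),
\]
not the two-term identity you want; no choice of $k$ or of extracted monomial eliminates the index-$2(k-1)$ family or the denominator contribution of the index-$2k$ family. (Note also that the monomials from different fixed points only line up after the normalization above; your alternative picture of matching a monomial $t^{\text{something}+a}$ at one point against $t^{\text{same something}}$ at another does not survive the fact that $\sum_{w_{pj}<0}w_{pj}$ varies from point to point.)

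The proposition then follows only by induction on $k$: for $k=0$ there are no points of negative index and index-$0$ points have no negative weights, so the recursion collapses to $\sum_{\lambda_p=0}\nu_p(a)=\sum_{\lambda_p=1}\nu_p(-a)$, and the general case follows by feeding each equality into the next instance of the recursion. This induction is absent from your proposal. A secondary gap: your parenthetical ``no positive weight smaller than $a$, hence no negative weight larger than $-a$'' is not automatic --- the expansion requires that no weight have absolute value less than $a$, and this must be arranged (the paper does so by reversing the action if necessary, which leaves the statement to be proved invariant); ``hence'' does not supply an argument.
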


\begin{proof}
Fix an integer $k \in \{0, 1,\dots,n- 1\}$.
By Theorem~\ref{Li},
\begin{multline*}\label{lieq}
N^k = \sum_{p \in M^{S^1}} (-1)^k \frac{  \sigma_k (t^{w_{p1}}, \cdots, t^{w_{pn}})}{\prod_{j=1}^n (1-t^{w_{pj}})} \\
=\sum_{p \in M^{S^1}}  (-1)^{\lambda_{p} + k} \frac 
{
 \sigma_k (t^{w_{p1}}, \cdots, t^{w_{pn}})
 \prod_{w_{pj}<0} t^{-w_{pj}}
}
{\prod_{j=1}^n (1-t^{|w_{pj}|})} .
\end{multline*}
Here, $\prod_{w_{pj} < 0} t^{- w_{pj}}$ denotes the product of the $t^{- w_{pj}}$ over 
$j \in \{1,\dots,n\}$ satisfying $w_{pj} < 0$.

Moreover, 
by reversing the action if necessary, we may assume that there is no  isotropy weight $b$  at any fixed point with $|b| < a$.
Thus, as $t \to 0$, at any fixed point $p$ we have
$$
\sigma_k (t^{w_{p1}}, \cdots, t^{w_{pn}})
\textstyle \prod_{w_{pj}<0} t^{-w_{pj}} = 
\begin{cases}
\nu_{p}(a)t^a + O(t^{a+1}) & \mbox{if } \lambda_{p} = k - 1 \\
1+O(t^{2a }) & \mbox{if }  \lambda_{p}=k \\
\nu_{p}(-a)t^a + O(t^{ a + 1}) & \mbox{if }  \lambda_{p} = k + 1 \\
O(t^{2a }) & \mbox{otherwise,}
\\
\end{cases}
$$
where $\nu_{p}(\pm a)$ is the number of times the weight $\pm a$ occurs at $p$.
Similarly, 
$$ \prod_{j=1}^n \left(1 - t^{|w_{pj}|}\right) = 1 -  \big( \nu_{p}(a) + \nu_{p}(-a)\big) t^a + O(t^{a+1}).$$

Together, the three displayed equations imply that
\begin{multline*}
N_k =
- \sum_{ \lambda_{p}=k-1} \nu_{p}(a) t^a +
\sum_{  \lambda_{p}=k} \big(1 -  \big(\nu_{p}(-a)+\nu_{p}(a)\big)t^a\big)  \\
- \sum_{ \lambda_{p}=k+1} \nu_{p}(-a)t^a  
+ O(t^{a+1}),  \\
\end{multline*}
where each sum is over all fixed points with the given number of negative weights. 
Therefore, since $N_k = \sum_{\lambda_p = k} 1$, taking the leading  coefficient of the Taylor expansion gives us
\begin{equation}\labell{key}
\sum_{  \lambda_{p}=k} \big(\nu_{p}(-a)+\nu_{p}(a)\big) =\sum_{ \lambda_{p}=k-1} \nu_{p}(a)+\sum_{  \lambda_{p}=k+1} \nu_{p}(-a). 
\end{equation}

By definition, there are no fixed points with negative index, and points of index $0$ have
no negative weights. Hence for $k = 0$ \eqref{key} simplifies to
\begin{equation*}
\sum_{  \lambda_{p}=0} \nu_{p}(a) =\sum_{\lambda_{p}=1} \nu_{p}(-a),
\end{equation*}
which proves the claim in this case.
Moreover, if we assume that $\sum_{\lambda_p = k - 1} \nu_p(a) =  \sum_{\lambda_p = k} \nu_p(-a)$,
then \eqref{key} immediately implies that 
$\sum_{\lambda_p = k } \nu_p(a) =  \sum_{\lambda_p = k + 1} \nu_p(-a)$.
Hence, the claim follows by induction.
\end{proof}

\section{Consequences for multigraphs}

The main goal of this section is to prove Theorem~\ref{positive}.
First, we review  some material, 
which we adapt from \cite{GS}.
Let the circle act on a closed almost-complex manifold $(M,J)$ 
with isolated fixed points.
Given a fixed point $p \in M^{S^1}$ with isotropy weights $w_{p1},\dots,w_{pn}$,
let
$$\Gamma_p = \sum_{i=1}^n w_{pi}.$$

\begin{Definition}\labell{multigraph}
A {\bf labelled (directed) multigraph} is a set $V$ of vertices, a set $E$ of edges, 
maps $i \colon E \to V$ and $t \colon E \to V$
giving the initial and terminal vertices of each edge, and a map $w$ from $E$ to the positive integers. 
\end{Definition}

We say that a  multigraph contains a  {\bf self-loop} if there exists an
edge $e$ that connects a vertex to itself, that is, $i(e) = t(e)$.
Similarly, the multigraph  has {\bf multiple edges} if there exist distinct edges $e$ and $e'$
with $i(e) = i(e')$ and $t(e) = t(e')$.  A priori, both are allowed.

The next two definitions
correspond to \cite[Definition 4.9]{GS} and \cite[Definition 4.12]{GS}, respectively.

\begin{Definition}\labell{describes}
We say that a labelled  multigraph with vertex set $M^{S^1}$ {\bf describes} $M$ if the following hold:
\begin{enumerate}
\item  The multiset of isotropy weights at  $p$ is
 $\{ w(e) \mid i(e) = p \} \cup 
\{ - w(e) \mid t(e) = p \}$ for all $p \in M^{S^1}$; and
\item For each edge $e$, the two endpoints $i(e)$ and $t(e)$ are in the same component of the isotropy submanifold $M^{\Z/(w(e))}$.
\end{enumerate}
\end{Definition}

\begin{Definition}\label{pos}
We say that $M$
satisfies the {\bf positivity property} $(\mathcal P^+_0)$  if
it can be described by a labelled multigraph so that
$$\Gamma_{i(e)} \geq \Gamma_{t(e)} $$
for every edge $e$.
\end{Definition}

The following theorem is an immediate consequence of \cite[Theorem  1.3]{GS}.

\begin{Theorem}\label{GStheorem}
Consider a Hamiltonian circle action on an $8$-dimensional closed symplectic manifold 
$M$ with exactly five fixed points. If 
$M$ satisfies the positivity property $(\mathcal P^+_0)$, then 
the isotropy weights at the fixed points of $M$ agree with those of some linear action on
$\CP^4$.
\end{Theorem}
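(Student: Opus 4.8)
The plan is to deduce the statement directly from \cite[Theorem 1.3]{GS}: the substance is entirely in that reference, and the only thing that needs checking is that the positivity property $(\mathcal P_0^+)$ of Definition~\ref{pos} is precisely the hypothesis used there. So I would begin by matching conventions. The labelled multigraphs of Definitions~\ref{multigraph} and~\ref{describes} are adapted from \cite[Definition 4.9]{GS} and \cite[Definition 4.12]{GS}, and I would confirm term by term that ``describes $M$'' here agrees with theirs: vertices are the fixed points, an edge $e$ with $i(e)=p$ and $t(e)=q$ records an isotropy weight appearing as $+w(e)$ at $p$ and as $-w(e)$ at $q$, and $p,q$ lie in a common component of $M^{\Z/w(e)}$. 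Granting this, the inequality $\Gamma_{i(e)} \ge \Gamma_{t(e)}$ along every edge is exactly their condition $(\mathcal P^+_0)$, and Theorem~\ref{GStheorem} is then a verbatim specialization of \cite[Theorem 1.3]{GS}.

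For completeness I would recall why that citation has content. Since $\dim M = 8$ and $M$ has exactly $5$ fixed points, the moment map is a perfect Morse function with exactly one fixed point of index $2i$ for each $i \in \{0,1,2,3,4\}$; the minimum has four positive weights and the maximum four negative weights. Godinho and Sabatini combine this with the localization relations (in the form of Theorem~\ref{Li} and its relatives), with the relations among weights forced by the isotropy-submanifold condition in Definition~\ref{describes}(2), and with the existence of a describing multigraph on which the positivity inequality holds along every edge, to run a finite case analysis over the possible weight multisets. In each surviving case the weights agree, after possibly reversing the action and rescaling, with those of a linear circle action on $\CP^4$, which yields the conclusion.

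I do not expect a genuine obstacle here, since the theorem is purely a restatement of \cite{GS} --- the real work of this paper is instead to verify its hypothesis, i.e.\ Theorem~\ref{positive}. The only points demanding care are bookkeeping ones: making sure the edge-orientation conventions and the precise form of condition~(2) in Definition~\ref{describes} match \cite[Definition 4.9]{GS} and \cite[Definition 4.12]{GS}, and noting that the cohomological and equivariant conclusions stated in Theorem~\ref{main} are the additional output of \cite[Theorem 1.3]{GS} (together with \cite{T}), not of the weaker statement isolated in Theorem~\ref{GStheorem}.
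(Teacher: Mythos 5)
Your proposal matches the paper exactly: the paper offers no independent proof, stating only that the theorem ``is an immediate consequence of \cite[Theorem 1.3]{GS},'' which is precisely your reduction. Your additional remarks on matching the multigraph conventions and on where the real content of the paper lies (Theorem~\ref{positive}) are accurate but not part of the paper's argument for this statement.
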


Here, a {\bf linear action} on $\CP^4$ is the action induced by  an embedding 
$S^1  \hookrightarrow U(5)$, where
$U(5)$ acts on  
$\CP^4$ by the defining representation.  

With this theorem as motivation, we explore the consequences of Proposition~\ref{match}
on the labelled multigraphs that describe actions on almost complex manifolds.
Our first lemma shows that we do not need multigraphs with self-loops.

\begin{Lemma}\labell{decreases}
Let the circle act  on a closed almost-complex manifold $(M,J)$ with isolated fixed points.
There exists a labelled multigraph describing $M$ such that, for each edge $e$,
the index of $i(e)$ in the isotropy submanifold $M^{\Z/(w(e))}$ is two less than the
index of $t(e)$ in $M^{\Z/(w(e))}$. 
In particular, the multigraph has no self-loops.
\end{Lemma}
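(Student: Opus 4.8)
The goal is to construct a multigraph describing $M$ where edges go from lower-index to higher-index within the relevant isotropy submanifold, with the index jumping by exactly 2. The key structural fact is that for each edge $e$ with weight $w(e)$, the endpoints $i(e)$ and $t(e)$ lie in the same component $N$ of $M^{\Z/(w(e))}$, which is itself a closed almost complex manifold carrying a residual circle action (since $S^1/(\Z/w(e)) \cong S^1$ acts on $N$) with isolated fixed points — namely the fixed points of $M$ lying in $N$. So the natural approach is to build the multigraph one isotropy submanifold at a time.

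Let me think about how. First I would fix a component $N$ of $M^{\Z_m}$ for some $m \geq 1$ (where $m=1$ gives $N=M$). On $N$, the smallest positive isotropy weight (with respect to the residual $S^1$ action on $N$, after possibly reversing the action) is some value $a$; and since the original weights on $M$ restricted to $N$ are all divisible by... no — actually I should be careful: the weights of the residual action on $N$ are $\frac{1}{?}$... Let me reconsider. The point is simpler: on $N$ the weight $a$ occurring, I want to apply Proposition \ref{match} to $N$. Proposition \ref{match} gives a bijection between occurrences of the weight $+a$ at index-$2k$ fixed points of $N$ and occurrences of $-a$ at index-$(2k+2)$ fixed points of $N$. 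I would use this bijection to create edges: each matched pair $(p, q)$ with $+a$ at $p$ of index $2k$ and $-a$ at $q$ of index $2k+2$ becomes an edge $e$ from $i(e) = p$ to $t(e) = q$. The weight label $w(e)$ should be the weight of the *original* $M$-action corresponding to this $N$-weight $a$. Then I peel off these weights and recurse on the next-smallest weight, and on smaller isotropy submanifolds. The edge weight $w(e)$, being an $M$-isotropy weight with $i(e), t(e) \in N \subseteq M^{\Z_m}$, is a multiple of $m$, and then $N' :=$ the component of $M^{\Z/w(e)}$ containing $p$ and $q$ sits inside $N$; within $N'$ the residual weight at $p$ corresponding to $e$ is still "$a$-like" (the smallest), so the index of $p$ in $N'$ is two less than that of $q$ in $N'$ — this is exactly what Proposition \ref{match} delivers, applied now to $N'$ rather than $N$. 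I need to argue these indices match up: the index of $p$ in $N'$ counts negative residual weights, and the $\pm a$ matching from Proposition \ref{match} applied to $N'$ refines the one from $N$.

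The main obstacle is bookkeeping the recursion so that it actually terminates and produces a genuine description (Definition \ref{describes}): I must verify that (1) every isotropy weight at every fixed point of $M$ is accounted for exactly once — i.e., the multiset $\{w(e) : i(e) = p\} \cup \{-w(e) : t(e) = p\}$ is correct — and (2) the edge-endpoint condition (2) of Definition \ref{describes} holds, which is automatic from the construction since we only ever join points already in a common component of $M^{\Z/w(e)}$. For (1): the recursion should process weights in increasing order of absolute value; applying Proposition \ref{match} to each isotropy submanifold pairs up all $+a$ occurrences (at non-top index) with $-a$ occurrences (at non-bottom index) within that submanifold, and one must check that running over all submanifolds and all weight values exhausts every weight with no double-counting. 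The cleanest way to organize this is a downward induction on $\dim N$ together with an upward induction on the weight $a$ within each $N$, mirroring the inductive structure already present in the proof of Proposition \ref{match}; the no-self-loops conclusion is then immediate, since any edge $e$ strictly increases the index in $M^{\Z/w(e)}$, so $i(e) \neq t(e)$.
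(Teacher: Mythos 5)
Your core idea is the paper's: apply Proposition~\ref{match} to components of isotropy submanifolds and convert the resulting count equalities into edges. But the paper's proof is a one-pass argument with no recursion, no peeling, and no induction on dimension, because of an observation you circle around without quite isolating: for \emph{any} positive integer $l$, every weight on a component $Z$ of $M^{\Z/(l)}$ is divisible by $l$, so a weight of absolute value $l$ is automatically of minimal absolute value on $Z$, and Proposition~\ref{match} applies to $Z$ with $a = l$ directly. Hence, for each $l$ and each component $Z$ of $M^{\Z/(l)}$, the number of occurrences of $-l$ at points of $Z$-index $2k$ equals the number of occurrences of $+l$ at points of $Z$-index $2k-2$; choosing any bijection for each such triple $(l, Z, k)$ and declaring each matched pair to be an edge labelled $l$ produces the desired multigraph immediately. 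Every weight of $M$ is consumed exactly once, condition (2) of Definition~\ref{describes} holds because both endpoints lie in $Z$, and the index condition holds by construction.

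The one step in your plan that does not work as written is the ``refinement'': you propose to match occurrences of $+a$ with occurrences of $-a$ at the level of a large submanifold $N$ (e.g.\ $N = M$ itself) and then assert that this matching restricts to each component $N'$ of $M^{\Z/(w(e))}$. A bijection between two count-equal sets need not respect a partition of those sets into smaller pieces, and two points matched at the level of $N$ may well lie in different components of $M^{\Z/(w(e))}$, violating both condition (2) of Definition~\ref{describes} and the index statement. The fix is exactly the simplification above: perform the matching directly inside each component of $M^{\Z/(l)}$, where Proposition~\ref{match} already gives the count equality you need. Once you do this, the recursion and the ``peel off the smallest weight'' step --- which, if read literally as applying Proposition~\ref{match} to a non-minimal weight on the same manifold, is not something the proposition licenses --- become unnecessary.
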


\begin{proof}
Fix $l \in \N$. There cannot be any positive weight in  the isotropy submanifold $M^{\Z/(l)}$ that is smaller than $l$.
Therefore, by applying Proposition~\ref{match} to each component $Z \subset M^{\Z/(l)}$, 
we see that
the number of times the weight $-l$ occurs in points of index $2 k$ in $Z$ is
equal to the number of times the weight $+l$ occurs in points of index $2 k-2$ in $Z$.
Both claims follow immediately.

\end{proof}

\begin{Remark}
In contrast, we sometimes  need multigraphs with multiple edges.
For example, there are circle actions on  $S^6 \cong G_2/SU(3)$
that preserve a natural almost complex structure   and have exactly two fixed points.
Any labelled multigraphs describing these examples must have multiple edges.
Similarly, for $\ell = 4$ or $5$,
there exists a  Hamiltonian circle action on a $6$-dimensional closed 
symplectic manifold with exactly four  fixed points that have
the following isotropy weights: $\{-1,-2,-3\}, \{1,-1,-\ell\}, \{1,\ell,-1\},$ and $\{1,2,3\}$ \cite{M}.
Again, any labelled multigraphs describing this example must have multiple edges.
\end{Remark}

This lemma has the following consequence for Hamiltonian actions.

\begin{Corollary}\labell{cor:decreases}
Let the circle act  on a $2n$-dimensional closed symplectic manifold $(M,\omega)$ with isolated fixed points and with moment map  $\phi \colon M \to \R$.
There exists a multigraph describing $M$ with the following property:
if $e$ is an edge such that either the index of $t(e)$ is $2$ or the index of $i(e)$ is $2n -2$, 
then $\phi(i(e)) < \phi(t(e))$.
\end{Corollary}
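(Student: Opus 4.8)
The plan is to take the multigraph furnished by Lemma~\ref{decreases} and to show that it already has the required property, by examining each edge through the lens of its isotropy submanifold. Fix an edge $e$ and write $l = w(e)$; let $Z$ be the component of $M^{\Z/(l)}$ containing $i(e)$ and $t(e)$, which exists by part (2) of Definition~\ref{describes}. The submanifold $Z$ is a closed connected symplectic submanifold of $M$, and the restriction of the circle action to $Z$ is Hamiltonian with moment map $\phi|_Z$ and fixed point set $Z \cap M^{S^1}$, which is finite. Consequently $\phi|_Z$ is a perfect Morse function whose critical points are the fixed points in $Z$, the Morse index at $p \in Z \cap M^{S^1}$ being twice the number of negative isotropy weights of $p$ that are tangent to $Z$ -- equivalently, twice the number of negative weights of $p$ (in $M$) that are divisible by $l$. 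In particular $\phi|_Z$ attains its minimum at a single point, the fixed point of index $0$ in $Z$, and its maximum at a single point, the fixed point of index $2\dim_\C Z$ in $Z$. Note also that, since the weights of $p$ tangent to $Z$ form a sub-multiset of all weights of $p$, the index of $p$ in $Z$ is at most its index in $M$.

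Now fix an edge $e$ satisfying the hypothesis of the corollary. If the index of $t(e)$ in $M$ equals $2$, then $t(e)$ has a unique negative isotropy weight; since $e$ contributes $-l$ at its terminal vertex, this weight is $-l$, which is divisible by $l$ and hence tangent to $Z$, so the index of $t(e)$ in $Z$ is at least $2$. As it is also at most the index of $t(e)$ in $M$, it equals $2$, whence by Lemma~\ref{decreases} the index of $i(e)$ in $Z$ is $0$; thus $i(e)$ is the unique minimum of $\phi|_Z$, and since $t(e)$ is a distinct fixed point of nonzero index in $Z$ we conclude $\phi(i(e)) < \phi(t(e))$. If instead the index of $i(e)$ in $M$ equals $2n-2$, the argument is symmetric: $i(e)$ has a unique positive weight, which equals $+l$ and is therefore tangent to $Z$, so every positive weight of $i(e)$ is tangent to $Z$ and the index of $i(e)$ in $Z$ equals $2\dim_\C Z - 2$; by Lemma~\ref{decreases} the index of $t(e)$ in $Z$ equals $2\dim_\C Z$, so $t(e)$ is the unique maximum of $\phi|_Z$, and again $\phi(i(e)) < \phi(t(e))$.

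The substance of the argument is entirely in Lemma~\ref{decreases}; the only points demanding care are the standard structural facts invoked in the first paragraph -- that $M^{\Z/(l)}$ is a closed symplectic submanifold, that the restriction of a Hamiltonian action to such a submanifold is Hamiltonian with moment map the restriction, and that a moment map for a Hamiltonian circle action on a compact connected symplectic manifold with isolated fixed points is a perfect Morse function with a unique minimum and a unique maximum -- together with the bookkeeping that the weights of a fixed point tangent to a component of $M^{\Z/(l)}$ are exactly those weights divisible by $l$. I do not expect any serious obstacle beyond assembling these ingredients.
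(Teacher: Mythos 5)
Your proposal is correct and follows essentially the same route as the paper: take the multigraph produced by Lemma~\ref{decreases}, observe that an edge $e$ with $t(e)$ of index $2$ (resp.\ $i(e)$ of index $2n-2$) forces $i(e)$ (resp.\ $t(e)$) to have index $0$ (resp.\ top index) in the relevant component of $M^{\Z/(w(e))}$, and conclude that it is the unique minimum (resp.\ maximum) of the restricted moment map there. Your write-up merely spells out in more detail the standard facts about isotropy submanifolds and the index comparison that the paper leaves implicit.
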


\begin{proof}
Let $e$ be an edge such that the index of $t(e)$ is $2$. By the lemma above,
the index of $i(e)$ in the isotropy submanifold $M^{\Z/(w(e))}$ is $0$. Hence, $i(e)$ is the
unique point in the component of  $M^{\Z/(w(e))}$ containing $t(e)$ 
where $\phi$ achieves its minimum. A fortiori, $\phi(i(e)) < \phi(t(e))$.
The remaining case is similar.
\end{proof}

To prove our main theorem, we need the following proposition:

\begin{Proposition}\labell{order}
Let the circle act on a $2n$-dimensional closed symplectic manifold $(M,\omega)$ with exactly $n+1$ fixed points and with moment map
$\phi \colon M \to \R$.
Then we can label the fixed points $p_0,p_1,\dots,p_n$ so that $p_i$ has
index $2i$, and
$$ \phi(p_i) < \phi(p_j) \quad \mbox{and}  \quad \Gamma_{p_i} > \Gamma_{p_j} \quad \mbox{for all } i<j .$$
\end{Proposition}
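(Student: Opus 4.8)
The plan is to build the labeling in two stages: first order the fixed points by index and moment value, then verify the claim about $\Gamma$. Since $M$ has exactly $n+1$ fixed points and $H^{2i}(M;\R)\cong H^{2i}(\CP^n;\R)$, the Morse-theoretic discussion in the introduction shows there is precisely one fixed point of index $2i$ for each $i\in\{0,\dots,n\}$; call it $p_i$. So the index condition is automatic, and it remains to check the two inequalities $\phi(p_i)<\phi(p_j)$ and $\Gamma_{p_i}>\Gamma_{p_j}$ for $i<j$. For the moment-map inequality, I would argue that it suffices to prove $\phi(p_i)<\phi(p_{i+1})$ for all $i$, and reduce this to the case of adjacent indices. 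Here the natural tool is a multigraph describing $M$ of the type produced by Lemma~\ref{decreases}: along any edge $e$, the index of $i(e)$ in $M^{\Z/(w(e))}$ is two less than that of $t(e)$. In particular, $M$ is connected (the minimum and maximum of $\phi$ are joined by a chain of edges since the graph underlying the action is connected), and one can try to show that an edge $e$ must connect $p_i$ to $p_{i+1}$ for some $i$, because the index in an isotropy submanifold differs from the index in $M$ only by the contribution of weights divisible by $w(e)$; with $n+1$ fixed points this rigidity forces edges to go between consecutive-index points. Then $\Gamma_{i(e)}\ge\Gamma_{t(e)}$ is \emph{not} yet known — that is exactly the positivity property — so instead I would get the ordering of $\phi$ from the fact that along such an edge $i(e)$ is the minimum of $\phi$ on its component of $M^{\Z/(w(e))}$ when that component is small enough, as in Corollary~\ref{cor:decreases}, and patch these local statements together.

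For the inequality on the $\Gamma$'s, the key observation is that for a fixed point $p$ of index $2i$, the quantity $\Gamma_p=\sum_j w_{pj}$ equals (sum of positive weights) minus (sum of absolute values of negative weights), and there are exactly $i$ negative weights. The smallest positive weight $a$ from Proposition~\ref{match} (applied, via Lemma~\ref{decreases}, to each isotropy submanifold) controls how $\Gamma$ changes as one moves along an edge: if $e$ is an edge with $w(e)=a$ connecting $p_{i}=i(e)$ to $p_{i+1}=t(e)$, then passing from $i(e)$ to $t(e)$ removes a weight $+a$ from one list and adds a weight $-a$ to the other, so $\Gamma$ drops by exactly $2a$ along that edge — hence $\Gamma_{p_i}>\Gamma_{p_{i+1}}$. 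More generally, along any edge $e$ the change in $\Gamma$ is $-2w(e)<0$, since the weight $+w(e)$ at $i(e)$ is replaced by the weight $-w(e)$ at $t(e)$ and all other weights are unaffected by this particular edge — wait, that is only literally true edge-by-edge in the sense that the multiset of weights at a vertex is assembled from all edges at it. The correct statement is: $\Gamma_p = \sum_{e:\,i(e)=p} w(e) - \sum_{e:\,t(e)=p} w(e)$, so if I can produce, for each $i$, a single edge from $p_i$ to $p_{i+1}$ and show the rest of the edge structure is compatible, I can compare $\Gamma_{p_i}$ and $\Gamma_{p_{i+1}}$ directly. I would therefore aim to show the describing multigraph of Lemma~\ref{decreases} can be taken so that its underlying graph is a "ladder" respecting the index filtration, and then read off both inequalities at once.

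Concretely, the steps in order: (1) Use the Morse theory recalled in the introduction to get the points $p_0,\dots,p_n$ with $p_i$ of index $2i$, and note $\phi$ is injective on $M^{S^1}$ generically after reversing/rescaling the action if needed — actually injectivity on fixed points is automatic for a perfect Morse function with one critical point per even index only after a generic perturbation of $\phi$, so I would instead phrase everything in terms of indices and use that the moment map is monotone along gradient flow lines. (2) Fix a describing multigraph as in Lemma~\ref{decreases}. (3) Show each edge $e$ joins $p_i$ to $p_{i+1}$: the index of $t(e)$ in $M$ is at least the index of $t(e)$ in $M^{\Z/(w(e))}$, which is two more than the index of $i(e)$ in $M^{\Z/(w(e))}$, which is at most the index of $i(e)$ in $M$; combined with a counting argument using that there are only $n+1$ fixed points and one of each index, this pins down $\mathrm{index}(t(e))=\mathrm{index}(i(e))+2$. (4) Deduce $\phi(i(e))<\phi(t(e))$ for every edge from the local minimality argument of Corollary~\ref{cor:decreases} (now the base and top index conditions there are satisfied by every edge), and chain edges together through all indices to get $\phi(p_0)<\phi(p_1)<\dots<\phi(p_n)$. (5) For each consecutive pair, select one edge $e$ from $p_i$ to $p_{i+1}$; since $w(e)>0$ and the weight lists at the two ends differ by removing $+w(e)$ and inserting $-w(e)$ along this edge while the contributions of all other edges to $\Gamma$ are the same data — more carefully, compute $\Gamma_{p_{i+1}}-\Gamma_{p_i}$ by matching up the remaining edges — conclude $\Gamma_{p_{i+1}}<\Gamma_{p_i}$. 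The main obstacle I anticipate is step (3): controlling how the index in an isotropy submanifold compares to the global index, and ruling out "long" edges that skip indices, which requires carefully exploiting that the fixed set has exactly $n+1$ points and that the describing multigraph realizes all weights. The $\Gamma$ computation in step (5) should be routine once the graph structure is understood; everything else is bookkeeping with Morse theory and the two lemmas already established.
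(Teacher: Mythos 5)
There is a genuine gap, and it sits exactly where you predicted: step (3). It is simply not true that every edge of a describing multigraph joins fixed points of consecutive index, even for manifolds satisfying the hypotheses of the proposition. Already for the standard linear action on $\CP^n$ with generic distinct weights $0<a_1<\dots<a_n$ on the homogeneous coordinates, the describing multigraph is the complete graph on the $n+1$ fixed points: there is an edge from $p_0$ (index $0$) directly to $p_n$ (index $2n$), and more generally from $p_i$ to $p_j$ for every $i<j$. Your proposed inequality chain (index of $t(e)$ in $M$ $\geq$ index of $t(e)$ in $M^{\Z/(w(e))}$ $=$ index of $i(e)$ in $M^{\Z/(w(e))}$ $+2$) only bounds things from one side and cannot pin down equality; no counting argument with $n+1$ fixed points will rule out index-skipping edges, because they actually occur in the very model spaces the theorem describes. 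Once the ``ladder'' structure fails, step (4) no longer lets you invoke Corollary~\ref{cor:decreases} for all edges (it genuinely only covers edges into index $2$ and out of index $2n-2$, which is why the paper's proof of Theorem~\ref{positive} has to work harder), and step (5) collapses as well: even along a single edge $e$ from $p_i$ to $p_{i+1}$, the identity $\Gamma_p=\sum_{i(e)=p}w(e)-\sum_{t(e)=p}w(e)$ does not give $\Gamma_{p_{i+1}}-\Gamma_{p_i}=-2w(e)$, because the remaining edges at $p_i$ and at $p_{i+1}$ are entirely different edges with unrelated labels. You flagged this yourself but did not resolve it, and I do not see how to resolve it with the tools of this paper alone.

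For comparison, the paper does not argue through the multigraph at all here: after using the Morse-theoretic count from the introduction to label $p_i$ by index, it deduces both inequalities $\phi(p_i)<\phi(p_j)$ and $\Gamma_{p_i}>\Gamma_{p_j}$ directly from Proposition 3.4 and Lemma 3.23 of \cite{T}, which exploit the hypothesis $H^{2i}(M;\R)\cong H^{2i}(\CP^n;\R)$ through the structure of the (equivariant) cohomology rather than through the graph combinatorics. If you want a self-contained argument, you should aim at reproducing those results of \cite{T} (e.g., via canonical equivariant classes and the localization formula), not at forcing the describing multigraph into a ladder.
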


\begin{proof}
As we saw in the introduction,  
we can label the fixed points $p_0,p_1, \dots, p_n$ so that  $p_i$ has index $2i$;
moreover, $H^i(M;\R) = H^i(\CP^n;\R)$ for all $i$.
Therefore, the claim follows immediately from  Proposition 3.4 and Lemma 
3.23 in \cite{T}.
\end{proof}

We are now ready to prove our key technical result.  Recall from the introduction that, 
when taken together with Theorem~\ref{GStheorem}, it immediately implies Theorem~\ref{main}.

\begin{Theorem}\labell{positive}
Consider a Hamiltonian circle action on an $8$-dimensional closed symplectic manifold $(M,\omega)$ 
with exactly $5$ fixed points.
Then $M$ satisfies the positivity property $(\mathcal P^+_0)$.
\end{Theorem}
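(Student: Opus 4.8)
The plan is to start from the combinatorial structure provided by Proposition~\ref{order} and Lemma~\ref{decreases}, and then to eliminate the finitely many ways in which the inequality $\Gamma_{i(e)} \geq \Gamma_{t(e)}$ could fail. By Proposition~\ref{order} we may label the fixed points $p_0,\dots,p_4$ so that $p_i$ has index $2i$ and both $\phi(p_i) < \phi(p_j)$ and $\Gamma_{p_i} > \Gamma_{p_j}$ whenever $i < j$. By Lemma~\ref{decreases} we may choose a describing multigraph with no self-loops in which, for every edge $e$, the index of $i(e)$ in $M^{\Z/(w(e))}$ is two less than that of $t(e)$; in particular every edge of the multigraph goes from some $p_i$ to some $p_j$ with $i < j$ (the ambient index can only increase along an edge, since an edge from $p_a$ to $p_b$ forces $w(e)$ to be a weight at $p_a$ and $-w(e)$ a weight at $p_b$, and the index in $M^{\Z/(w(e))}$ strictly increases). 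Thus $\Gamma_{i(e)} > \Gamma_{t(e)}$ would follow immediately from Proposition~\ref{order} \emph{were it not} for edges whose endpoints are $p_i$ and $p_j$ with $i < j$ — which is in fact exactly what we need. So at first glance we seem to be done; the subtlety is that the two describing multigraphs furnished by Proposition~\ref{order}'s ordering and by Lemma~\ref{decreases} need not be the same one, and Definition~\ref{pos} only requires the existence of a single multigraph with the $\Gamma$-monotonicity property.

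Hence the real content is to show that the multigraph produced by Lemma~\ref{decreases} can be taken to have all edges directed from lower $\phi$ to higher $\phi$, after which monotonicity of $\Gamma$ along edges is automatic from Proposition~\ref{order}. The approach is: fix the Lemma~\ref{decreases} multigraph and consider an edge $e$ with $i(e) = p_j$, $t(e) = p_i$, $i < j$ — i.e.\ an edge pointing ``downward'' in $\phi$ but ``upward'' in ambient index. (Such an edge is \emph{a priori} possible because index in $M^{\Z/(w(e))}$ increasing does not by itself force $\phi$ to increase.) I would use Corollary~\ref{cor:decreases}, which already rules out such downward edges when $t(e)$ has index $2$ or $i(e)$ has index $2n-2 = 6$, to reduce to edges between $p_2$ and one of $p_1,p_3$ — with the only dangerous configurations being an edge from $p_3$ to $p_1$, from $p_3$ to $p_2$, or from $p_4$ to $p_2$ that happens to run downward in $\phi$. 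For each such edge, the two endpoints lie in a common component $Z$ of an isotropy submanifold $M^{\Z/(w(e))}$ of dimension $2$ or $4$; I would analyze the induced Hamiltonian $S^1$-action on $Z$ (using that $Z$ is a $2$- or $4$-sphere, or a $4$-dimensional space with few fixed points) and show that the describing multigraph of $Z$ — hence the relevant edge — can be re-chosen compatibly with $\phi$, again invoking Proposition~\ref{match}/Lemma~\ref{decreases} \emph{internally to $Z$}, where the low dimension makes the index-two and index-$(\dim Z - 2)$ cases cover everything.

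The main obstacle I expect is precisely the bookkeeping needed to \emph{simultaneously} arrange all edges to be $\phi$-increasing: changing the describing multigraph on one isotropy submanifold $Z$ (to fix one bad edge) alters which weights are ``used up'' at the shared fixed points, and one must check this does not create a new bad edge elsewhere. To control this I would argue fixed point by fixed point in order of increasing index (or increasing $\phi$): at $p_0$ all three weights are positive, so all three edges leave $p_0$ and go upward; proceeding to $p_1$, then $p_2$, etc., at each stage the negative weights at $p_k$ must be matched — via the multiset condition of Definition~\ref{describes} and the Proposition~\ref{match} matching of $\pm a$ weights — with positive weights at strictly-lower-$\phi$ fixed points, and the dimension count ($n=4$, five fixed points, at most $n = 4$ edges into any vertex) is tight enough that the matching is forced to respect $\phi$. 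Carrying out this forced matching — in particular verifying that the weight $-w(e)$ at $p_k$ can always be paired with an occurrence of $+w(e)$ at some $p_m$ with $m < k$ and $\phi(p_m) < \phi(p_k)$ — is where the eight-dimensional, five-fixed-point hypothesis is genuinely used, and is the step that fails in higher dimensions.
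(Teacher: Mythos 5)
Your proposal assembles exactly the right ingredients---Proposition~\ref{order}, Lemma~\ref{decreases}, and Corollary~\ref{cor:decreases}, which is precisely the paper's toolkit---but it does not close, and it misjudges where the remaining work lies. Two specific problems. First, the parenthetical claim in your opening paragraph, that the ambient index must increase along every edge because the index in $M^{\Z/(w(e))}$ increases by two, is not a valid inference: the index of a fixed point inside the isotropy submanifold $M^{\Z/(w(e))}$ counts only the negative weights divisible by $w(e)$, so it can go up while the ambient index (and $\phi$) goes down. Your subsequent worry about ``two different multigraphs'' is also a misdiagnosis: Proposition~\ref{order} produces no multigraph at all, only an ordering of the five fixed points, and that ordering is independent of which describing multigraph one uses. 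Second, and more importantly, the configurations you flag as still dangerous are in fact already excluded by the very corollary you cite, while the finishing argument is never carried out. Since $p_0$ has no negative weights and $p_4$ no positive ones, a bad edge (one with $i(e)=p_j$, $t(e)=p_i$, $j>i$) could only be $p_2\to p_1$, $p_3\to p_1$, or $p_3\to p_2$; in each of these either $t(e)$ has index $2$ or $i(e)$ has index $2n-2=6$, so Corollary~\ref{cor:decreases} forbids it. (Equivalently, as the paper argues: $p_1$ has exactly one incoming edge, whose source the corollary forces to be $p_0$; $p_3$ has exactly one outgoing edge, whose target the corollary forces to be $p_4$; every remaining edge then starts at a point of index at most $4$ and ends at a point of index at least $4$, and the only equality case is a self-loop at $p_2$, excluded by Lemma~\ref{decreases}.) Combined with $\Gamma_{p_i}>\Gamma_{p_j}$ for $i<j$ from Proposition~\ref{order}, this immediately yields $(\mathcal P^+_0)$.

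The program you sketch instead---analyzing the induced actions on the $2$- and $4$-dimensional isotropy submanifolds, re-choosing the describing multigraph component by component, and then doing an induction over fixed points to verify a ``forced matching'' of weights---is both unnecessary and, as written, only a plan: the step you yourself identify as the crux is never verified, and the bookkeeping worry about one local modification creating a new bad edge elsewhere is a phantom problem once the degree count above is in place. So the gap is concrete: you stopped exactly where a short counting argument (single edge into $p_1$, single edge out of $p_3$, no self-loops) would have finished the proof.
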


\begin{proof}
By Proposition~\ref{order}, we can label the fixed points $p_0,p_1,\dots,p_4$ so that $p_i$ has
index $2i$, and
$\phi(p_i) < \phi(p_j)$ and $\Gamma_{p_i} > \Gamma_{p_j}$  for all $ i<j.$
Hence, by Lemma~\ref{decreases} and Corollary~\ref{cor:decreases},
$M$ is described by a labelled multigraph with no self-loops such that
the edge $e'$ with $t(e') = p_1$ and the edge $e''$ with $i(e'') = p_3$ 
satisfy $\phi(i(e')) < \phi(t(e'))$ and $\phi(i(e'')) < \phi(t(e''))$.
Therefore, we must have $i(e') = p_0$
and $t(e'') = p_4$.
By considering the remaining istropy weights,
it is clear that for any other edge $e$, the index of $i(e)$ is at most $4$ and
the index of $t(e)$ is at least $4$. Since there is only one point of index four and no self-loops,
this implies that the index of $i(e)$ is less than the index of $t(e)$ for every edge $e$.
The claim follows immediately.
\end{proof}

\end{document}